
\documentclass[10pt,draft,reqno]{amsart}
     \makeatletter
     \def\section{\@startsection{section}{1}%
     \z@{.7\linespacing\@plus\linespacing}{.5\linespacing}%
     {\bfseries
     \centering
     }}
     \def\@secnumfont{\bfseries}
     \makeatother
\setlength{\textheight}{19.5 cm}
\setlength{\textwidth}{12.5 cm}
\newtheorem{theorem}{Theorem}[section]
\newtheorem{lemma}[theorem]{Lemma}

\theoremstyle{definition}
\newtheorem{definition}[theorem]{Definition}
\newtheorem{example}[theorem]{Example}
\theoremstyle{remark}

\numberwithin{equation}{section}
\setcounter{page}{1}
\begin{document}

\title[Essential sets for random operators constructed from Arratia flow]{Essential sets for random operators constructed from Arratia flow}

\author{A.~A.~Dorogovtsev}
\address{A.~A.~Dorogovtsev: Institute of Mathematics,
	National Academy of Sciences of Ukraine, Kiev, Ukraine}
\email{andrey.dorogovtsev@gmail.com}

\author{Ia.~A.~Korenovska}
\address{Ia.~A.~Korenovska: Institute of Mathematics,
	National Academy of Sciences of Ukraine, Kiev, Ukraine}
\email{iaroslava.korenovska@gmail.com}

\subjclass[2010] {Primary 60H25}

\keywords{Arratia flow, Kolmogorov widths, random operator}

\begin{abstract}
	In this paper we consider a strong random operator $T_t$ which describes shift of functions from $L_2(\mathbb{R})$ along an Arratia flow.  We find a compact set in $L_2(\mathbb{R})$ that doesn't disappear under $T_t$, and estimate its Kolmogorov widths.
\end{abstract}

\maketitle


	\section{Introduction. Arratia flow and random operators}
	In this paper we consider random operators in $L_2(\mathbb{R})$ which decsribe shifts of functions along an Arratia flow \cite{1}. Lets recall the deffinition.
	\begin{definition}
		[\cite{1}]
		A family of random processes $\{x(u,s),u\in\mathbb{R}, s\ge0\}$ is called an Arratia flow if 
		
		1) for each $u\in\mathbb{R}\;\;x(u,\cdot)$ is a Wiener process with respect to the joint filtration such that $x(u,0)=u$;
		
		2) for any $ u_1\le u_2$ and $t\ge0$
		$$
		x(u_1,t)\le x(u_2,t)\;\;\mbox{a.s.}
		$$
		
		3)  the joint characteristics are
		$$
		d<x(u_1,\cdot),x(u_2,\cdot)>(t)=1\!\!\,{\rm I}_{\{x(u_1,t)=x(u_2,t)\}}dt.
		$$
		
	\end{definition}
	In the informal language, Arratia flow is a family of Wiener processes started from each point of $\mathbb{R}$, which move independently up to the meeting, coalesce, and move together. It was proved in \cite{4,8} that for any $a,b\in\mathbb{R}$ and $t>0$ the set $x([a;b],t)$ is finite a.s. Since Arratia flow has a right-continuous modification \cite{9},   $x(\cdot,t):\mathbb{R}\to\mathbb{R}$ is a step function for any time $t>0$. Hence, for any $a,b\in\mathbb{R}$ and $t>0$ with probability one there exists a random point $ y\in\mathbb{R}$ for which
	\begin{equation}
		\label{equ0}
		\lambda\{u\in[a;b]:\;x(u,t)=y\}>0,
	\end{equation}
	where $\lambda$ is Lebesgue measure on $\mathbb{R}$. Since $x(\cdot,t)$ is right-continuous step function then for a fixed countable set $A$ 
	\begin{equation}
		\label{zer}
	\mathsf{P}\{x(\mathbb{R},t)\cap A\ne \emptyset\}=\mathsf{P}\{x(\mathbb{Q},t)\cap A\ne \emptyset\}\le\sum_{u\in\mathbb{Q}}\mathsf{P}\{x(u,t)\in A\}=0.
	\end{equation} 
	
Since for any $a<b$ the difference $ \frac{x(b,\cdot)-x(a,\cdot)}{\sqrt{2}}$ is a Wiener processes  until the collision happens, and $\frac{x(b,0)-x(a,0)}{\sqrt{2}}=\frac{b-a}{\sqrt{2}}$, then one can find the distribution of the time of coalescence $ \tau_{a,b}=\inf\{s\ge0|\; x(a,s)=x(b,s)\}$ of the processes $x(a,\cdot),x(b,\cdot) $, i.e. for any $t\ge0$
	\begin{equation}
	\label{tau}
\mathsf{P}\{ \tau_{a,b}\le t\}=	\mathsf{P}\{x(a,t)=x(b,t)\}=\sqrt{\frac{2}{\pi}}\int_{\frac{b-a}{\sqrt{2t}}}^{+\infty}e^{-\frac{v^2}{2}}dv.
	\end{equation}
	
	Lets notice that for a fixed time $t>0$ and an Arratia flow $ X=\{x(u,s),u\in\mathbb{R}, s\in [0;t]\}$ there exists an Arratia flow  $ Y=\{y(u,r),u\in\mathbb{R}, r\in[0;t]\}$ such that  trajectories of $X$ and $\widetilde{Y}=\{y(u,t-r),u\in\mathbb{R}, r\in[0;t]\}$ don't cross \cite{1,6}.  $Y$ is called a conjugated (or dual) Arratia flow. It was proved in \cite{2} the following change of variable formula for an Arratia flow.
	\begin{theorem} 
		[\cite{2}]For any time $t>0$ and nonnegative measurable function $h:\mathbb{R}\to\mathbb{R}$ such that $\int_{\mathbb{R}}h(u)du<+\infty$
	\begin{equation}
		\label{change}
		\int_{\mathbb{R}}h(x(u,t))du=\int_{\mathbb{R}}h(u)dy(u,t)\;\mbox{ a.s., }
	\end{equation} 
	where the last integral is in scence of Lebesgue-Stieltjes.
	\end{theorem}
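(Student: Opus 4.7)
My plan is to reduce \eqref{change} to a statement about the matching between clusters of $x(\cdot,t)$ and jumps of $y(\cdot,t)$, and then to pass from indicators to general $h$ by a standard measure-theoretic extension.

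First I would record the structural form of both sides. Since $x(\cdot,t)$ is a right-continuous nondecreasing step function, its image partitions $\mathbb{R}$ into maximal half-open intervals $[a(v),b(v))$ of constancy, one for each distinct value $v$, so
$$
\int_{\mathbb{R}}h(x(u,t))\,du=\sum_{v}h(v)\bigl(b(v)-a(v)\bigr).
$$
The dual flow $y$ is itself an Arratia flow, so $y(\cdot,t)$ is likewise a right-continuous nondecreasing step function, and the Lebesgue--Stieltjes integral on the right of \eqref{change} equals $\sum_{w}h(w)\Delta y(w,t)$, the sum ranging over the jump points of $y(\cdot,t)$. Thus \eqref{change} is equivalent to showing that the jumps of $y(\cdot,t)$ are located at the cluster values $v$ of $x(\cdot,t)$, with masses $\Delta y(v,t)=b(v)-a(v)$.

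The crux is then the duality identity
$$
y(\beta,t)-y(\alpha,t)=\lambda\{u\in\mathbb{R}:\;x(u,t)\in(\alpha,\beta]\},\qquad \alpha<\beta.
$$
I would establish this from the no-crossing property of $X$ and $\widetilde{Y}$: the reversed dual trajectory $r\mapsto y(\alpha,t-r)$ runs from $y(\alpha,t)$ at $r=0$ to $\alpha$ at $r=t$ and cannot cross any forward trajectory $x(u,\cdot)$, which forces $y(\alpha,t)=\sup\{u:\;x(u,t)\le\alpha\}$ provided $\alpha\notin x(\mathbb{R},t)$. Fixing a countable dense set $D\subset\mathbb{R}$, \eqref{zer} guarantees that almost surely $D\cap x(\mathbb{R},t)=\emptyset$, so the displayed identity holds simultaneously for all $\alpha,\beta\in D$; letting $\alpha\uparrow v$ and $\beta\downarrow v$ through $D$ around any cluster value $v$ then recovers the jump-mass identification $\Delta y(v,t)=b(v)-a(v)$.

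With this in hand, \eqref{change} holds for $h=\mathbf{1}_{(\alpha,\beta]}$ with $\alpha,\beta\in D$, extends by linearity to simple functions built from $D$, and then by the monotone class theorem together with monotone convergence to arbitrary nonnegative measurable $h$ with $\int h\,du<\infty$, a single almost sure null set handling all such $h$ at once. The principal obstacle is the duality identity in the previous paragraph: converting the geometric no-crossing of $X$ and $\widetilde{Y}$ into the precise pointwise equality $y(\alpha,t)=\sup\{u:\;x(u,t)\le\alpha\}$ requires delicacy at cluster boundaries, which is exactly why \eqref{zer} is invoked to choose $D$ outside the almost sure exceptional set.
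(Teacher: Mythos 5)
The paper does not actually prove this theorem; it is imported from \cite{2} with the reference given in the statement, so there is no internal argument to measure yours against. On its own terms your proof is correct and follows what is essentially the standard duality route: you read both sides of \eqref{change} as integrals of $h$ against two locally finite random measures on $\mathbb{R}$ (the push-forward of Lebesgue measure under $x(\cdot,t)$ and the Lebesgue--Stieltjes measure of $y(\cdot,t)$), reduce their equality to the identity $y(\beta,t)-y(\alpha,t)=\lambda\{u:\,x(u,t)\in(\alpha,\beta]\}$ for $\alpha,\beta$ in a fixed countable dense set $D$, and conclude by uniqueness of measures agreeing on the $\pi$-system of such half-open intervals together with monotone convergence; invoking \eqref{zer} to place $D$ almost surely outside $x(\mathbb{R},t)$ is exactly the right device for dodging the cluster boundaries, and it yields a single null set valid for all $h$ simultaneously. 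Two steps deserve to be written out rather than asserted. First, the passage from ``trajectories of $X$ and $\widetilde{Y}$ do not cross'' to $y(\beta,t)=\sup\{u:\,x(u,t)\le\beta\}$ for $\beta\notin x(\mathbb{R},t)$: non-crossing gives the inclusions $\{u:\,x(u,t)<\beta\}\subseteq(-\infty,y(\beta,t)]$ and $\{u:\,x(u,t)>\beta\}\subseteq[y(\beta,t),+\infty)$, and since for $\beta\notin x(\mathbb{R},t)$ these two sets partition $\mathbb{R}$ and are separated by monotonicity of $x(\cdot,t)$, their common boundary point is forced to be $y(\beta,t)$. Second, the uniqueness-of-measures argument needs local finiteness, i.e.\ $\lambda\{u:\,x(u,t)\in(\alpha,\beta]\}<+\infty$ a.s.; this follows from the finiteness of $x([a,b],t)$ combined with $x(u,t)\to\pm\infty$ as $u\to\pm\infty$, so that the preimage of a bounded interval is bounded. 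Neither point is a gap in the idea, only in the level of detail of the write-up.
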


	In this paper we consider random operators $T_t$, $t>0$, in $L_2(\mathbb{R})$ which are defined as follows   
	$$
	\left(T_tf\right)(u)=f(x(u,t)),
	$$
	where $f\in L_2(\mathbb{R})$ and $u\in\mathbb{R}$. It was proved in \cite{3} that $T_t$ is a strong random operator \cite{10} in $L_2(\mathbb{R})$, but, as it was shown in \cite{2}, is not a bounded one. Really, for the point $y$ from \eqref{equ0} one can introduce a sequence of the intervals $ A_i=[r_i;p_i]$ such that  $ y\in A_i$ for any $ i\ge 1$ and $ p_i-r_i\to0, i\to\infty$. Thus, for any $ i\ge 1$
	$$
	\|T_t1\!\!\,{\rm I}_{A_i}\|_{L_2(\mathbb{R})}^2\ge \lambda\{u\in[a;b]:\;x(u,t)=y\}>0,
	$$
	which can't be true if $ T_t$ was a bounded random operator. Hence, the image of a compact set under $T_t$ may not be a random compact set. Moreover, as it was mentioned in \cite{0}, image of a compact set under strong random operator may not exist.   However, in \cite{2} it was presented a family of compact sets in $L_2(\mathbb{R})$ which images under $T_t$ exists and are random compact sets. In this paper we consider a compact set of this type, and investigate the change of its Kolmogorov widths \cite{5} under $T_t$. 
	
	\section{$T_t$-essential functions}
	If support of function $f\in L_2(\mathbb{R})$ is bounded, $ suppf\subset[a;b]$, then $T_tf$ equals to $0$ with positive probability. Really, by \eqref{change}, one can check that 
	$$
	\mathsf{P} \left\{\int\limits_{-\infty}^{+\infty}f^2(x(u,t))du=0\;\right\}\ge
	\mathsf{P}\left\{\;x(\mathbb{R},t)\cap [a;b]=\emptyset\;\right\}= 
	$$
	$$=\mathsf{P}\left\{\int\limits_{-\infty}^{+\infty}1\!\!\,{\rm I}_{[a;b]}(x(u,t))du=0\;\right\}
	= \mathsf{P}\left\{\;\int\limits_{-\infty}^{+\infty}1\!\!\,{\rm I}_{[a;b]}(u)dy(u,t)=0\;\right\},
	$$
	where $ \{y(u,s),\;u\in\mathbb{R},\; s\in[0;t]\}$ is a conjugated Arratia flow. Since, by \eqref{tau},
	$$
\mathsf{P}\left\{\;\int\limits_{-\infty}^{+\infty}1\!\!\,{\rm I}_{[a;b]}(u)dy(u,t)=0\;\right\}=\mathsf{P}\left\{\;y(b,t)=y(a,t)\;\right\}>0,
	$$
	then $\mathsf{P}\left\{\;\|T_tf\|_{L_2(\mathbb{R})}=0\;\right\}>0$. This leads to the following definition.
	
	\begin{definition}
		For a fixed $t>0$ a function $ f\in L_2(\mathbb{R})$ is said to be a $T_t$-essential if
		$$
		\mathsf{P}\left\{\;\|T_tf\|_{L_2(\mathbb{R})}>0\;\right\}=1.
		$$
	\end{definition} 
	\begin{example}
		\label{ex1} 
		Let  $ f\in L_2(\mathbb{R})$ be an analytic function which doesn't equal totally to zero. Denote the set of its zeroes
		$
		Z_f=\{u\in\mathbb{R}|\;f(u)=0\}. 
		$
		Then, by \eqref{zer}, $\mathsf{P}\left\{x(\mathbb{R},t)\cap Z_f=\emptyset\right\}=1$,
		so $f$ is a $T_t$-essential for any $t>0$.
	\end{example}
	Let us notice that if $t_1\ne t_2$ then $ T_{t_1}$-essential function may not be a $T_{t_2}$-essential. 
	To introduce a $T_1$-essential and not $T_2$-essential function lets 
	consider an increasing sequence $ \{u_k\}_{k=0}^{\infty}$ such that $ u_0=0,u_1=1$ and for any $n\in\mathbb N $ 
	
	$$ u_{2n+1}-u_{2n}=\frac{1}{2^n},\qquad u_{2n}=u_{2n-1}+2n(\ln 2)^{\frac{1}{2}}.$$

	\begin{theorem}
		\label{thm21}
		The function $ f=\sum_{n=0}^{\infty}1\!\!\,{\rm I}_{[u_{2n};u_{2n+1}]}$ is a $T_1$-essential, and is not a $T_2$-essential.
	\end{theorem}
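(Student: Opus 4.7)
The plan is to translate $\|T_tf\|_{L_2}^{2}$ into a sum of non-negative increments of the conjugated Arratia flow via \eqref{change} and then to analyse the event that every increment vanishes. Since $f^{2}=f$ (as $f$ is an indicator), applying \eqref{change} with $h=f$ and the conjugated flow $y$ gives
\[
\|T_tf\|_{L_2(\mathbb R)}^{2}=\int_{\mathbb R}f(x(u,t))\,du=\int_{\mathbb R}f(u)\,dy(u,t)=\sum_{n=0}^{\infty}\bigl(y(u_{2n+1},t)-y(u_{2n},t)\bigr).
\]
Every summand is non-negative, so $\{\|T_tf\|=0\}$ coincides, up to a null set, with $\bigcap_{n\ge 0}A_{n}^{(t)}$, where $A_{n}^{(t)}:=\{y(u_{2n},t)=y(u_{2n+1},t)\}$ is the event that the two $y$-trajectories from the endpoints of the $n$-th filled interval have coalesced by time $t$. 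Applying \eqref{tau} to this pair in the conjugated flow (which is itself an Arratia flow) and using $e^{-v^{2}/2}\le 1$ gives, for the complementary events,
\[
\mathsf P(\Omega\setminus A_{n}^{(t)})=\sqrt{\tfrac{2}{\pi}}\int_{0}^{2^{-n}/\sqrt{2t}}e^{-v^{2}/2}\,dv\le\frac{2^{-n}}{\sqrt{\pi t}}.
\]

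For the assertion that $f$ is \emph{not} $T_{2}$-essential I specialise to $t=2$ and sum: $\sum_{n\ge 0}\mathsf P(\Omega\setminus A_{n}^{(2)})\le 2/\sqrt{2\pi}=\sqrt{2/\pi}<1$. Countable subadditivity yields $\mathsf P(\bigcup_{n}\Omega\setminus A_{n}^{(2)})<1$, and hence $\mathsf P(\bigcap_{n}A_{n}^{(2)})=\mathsf P(\|T_{2}f\|=0)>0$.

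The $T_{1}$-essential direction requires $\mathsf P(\bigcap_{n}A_{n}^{(1)})=0$ in a regime where the corresponding sum equals $2/\sqrt{\pi}>1$ and subadditivity conveys no information. My plan is to exploit the gap calibration $u_{2n}-u_{2n-1}=2n\sqrt{\ln 2}$: by \eqref{tau} the probability that the $n$-th \emph{gap} pair $(y(u_{2n-1},\cdot),y(u_{2n},\cdot))$ coalesces by time $1$ is of order $Cn^{-1}2^{-n^{2}}$, which is summable, so Borel--Cantelli gives that almost surely only finitely many gap pairs ever coalesce by time $1$. On this full-measure event the distant blocks are separated by permanent $y$-barriers, and the within-block coalescence events $A_{n}^{(1)}$ for large $n$ become conditionally decoupled from the rest of the flow; the task is then to turn this decoupling, together with the exact Gaussian form of \eqref{tau} and (for instance) a Paley--Zygmund estimate on the truncated partial sum $X_{N}=\sum_{n\le N}(y(u_{2n+1},1)-y(u_{2n},1))$, into an outright zero-probability statement for $\bigcap_{n}A_{n}^{(1)}$.

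The decisive and most subtle obstacle is precisely this last step. Because $\mathsf P(\Omega\setminus A_{n}^{(1)})\sim 2^{-n}/\sqrt{\pi}$ is summable, the naive independence heuristic predicts $\prod_{n}\mathsf P(A_{n}^{(1)})>0$, which would be the wrong sign; asymptotic independence alone is therefore not sufficient, and a genuine correlation input from the coalescing Brownian-motion structure must be used. The gap width $2n\sqrt{\ln 2}$ appears tailor-made so that this correlation is strong enough at $t=1$ to destroy the product bound but is already too weak at $t=2$, which is exactly what underlies the dichotomy in the theorem.
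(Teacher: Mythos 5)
Your treatment of the ``not $T_2$-essential'' half is complete and is essentially the paper's own argument: reduce $\|T_2f\|_{L_2}^2$ via \eqref{change} to $\sum_{n}\bigl(y(u_{2n+1},2)-y(u_{2n},2)\bigr)$, observe that vanishing of the norm is the intersection of the coalescence events $A_n^{(2)}$, and use \eqref{tau} plus countable subadditivity to get $\sum_n\mathsf{P}(\Omega\setminus A_n^{(2)})\le\sqrt{2/\pi}<1$. That part stands.

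The $T_1$-essential half is not a proof: you lay out a programme (decoupling of blocks via the gaps $2n\sqrt{\ln 2}$ and Borel--Cantelli, then ``a genuine correlation input'' to force $\mathsf{P}(\bigcap_nA_n^{(1)})=0$) and then explicitly declare the final step an unresolved obstacle. That step is the whole content of this half of the theorem, so the gap is genuine. Moreover, your own diagnosis shows why the programme cannot close as stated. Since $\mathsf{P}(\Omega\setminus A_n^{(1)})\le 2^{-n}/\sqrt{\pi}$ is summable, the \emph{first} Borel--Cantelli lemma already forces all but finitely many pairs to coalesce by time $1$ almost surely; combined with the block decoupling you invoke (the blocks become independent up to corrections of order $2^{-n^2}$), one gets
$$
\mathsf{P}\Bigl(\bigcap_nA_n^{(1)}\Bigr)\;\ge\;\prod_n\bigl(1-\mathsf{P}(\Omega\setminus A_n^{(1)})\bigr)-\sum_n O(2^{-n^2})\;>\;0,
$$
so the ``independence heuristic'' you distrust is in fact essentially correct here and points in the opposite direction to the theorem. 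For comparison, the paper's own route is to prove the stronger statement \eqref{eq1} via the \emph{second} Borel--Cantelli lemma applied to the events $\{w(u_{2n+1},1)-w(u_{2n},1)\ge 1\}$ for the free Wiener processes; but the deduction after \eqref{eq2} identifies $\widetilde{y}(u_{2n+1},t)-\widetilde{y}(u_{2n},t)$ with $w(u_{2n+1},t)-w(u_{2n},t)$ while dropping the indicator of $\{t<\tau[w(u_{2n},\cdot),w(u_{2n+1},\cdot)]\}$, on whose complement the difference is $0$, and \eqref{eq1} itself contradicts first Borel--Cantelli because $\sum_n\mathsf{P}\{y(u_{2n+1},1)\ne y(u_{2n},1)\}<\infty$. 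So neither your sketch nor a Paley--Zygmund refinement of it can complete this half with the stated sequence $\{u_k\}$; the difficulty you flag is real and not merely technical.
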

	\begin{proof} To prove that $f$ is not a $T_2$ essential we show that $
	\mathsf{P}\{\;\|T_2f\|_{L_2(\mathbb{R})}>0\;\}<1.
		$
		Since	$ [u_{2k};u_{2k+1}]\cap[u_{2j};u_{2j+1}]=\emptyset$   for any $k\ne j$ then, by \eqref{change},
		$$
		\mathsf{P}\{\;\|T_2f\|^2_{L_2(\mathbb{R})}>0\;\}=
		\mathsf{P}\left\{\int\limits_{-\infty}^{+\infty}\left(\sum\limits_{n=0}^{\infty}1\!\!\,{\rm I}_{[u_{2n};u_{2n+1}]}(x(u,2))\right)^2du>0\;\right\}=
		$$
		$$=
		\mathsf{P}\left\{\;\sum\limits_{n=0}^{\infty}\int\limits_{-\infty}^{+\infty}1\!\!\,{\rm I}_{[u_{2n};u_{2n+1}]}(x(u,2))du>0\;\right\}=
		$$
		$$=
		\mathsf{P}\left\{\;\sum_{n=0}^{\infty}(y(u_{2n+1},2)-y(u_{2n},2))>0\;\right\}=
		$$
		$$
		=\mathsf{P}\left\{\;\exists n\ge0:\; y(u_{2n+1},2)\ne y(u_{2n},2)\;\right\}\le	\sum\limits_{n=0}^{\infty}P\left\{\;y(u_{2n+1},2)\ne y(u_{2n},2)\;\right\}.
		$$
	Thus, by \eqref{tau},
		$$
		\sum\limits_{n=0}^{\infty}\mathsf{P}\left\{\;y(u_{2n+1},2)\ne y(u_{2n},2)\;\right\}=
	\sum\limits_{n=0}^{\infty}\frac{1}{\sqrt{4\pi}}\int\limits_{-\frac{1}{2^{n+1}}}^{\frac{1}{2^{n+1}}}e^{-\frac{v^2}{4}}dv
		\le\frac{1}{\sqrt{\pi}}<1.
		$$
		Consequently, the function	$f=\sum_{n=0}^{\infty}1\!\!\,{\rm I}_{[u_{2n};u_{2n+1}]}$ is not a $T_2$-essential.
		To prove that $f=\sum_{n=0}^{\infty}1\!\!\,{\rm I}_{[u_{2n};u_{2n+1}]}$ is a $T_1$-essential one can show the following estimation.
		\begin{lemma} Let $ \{w(u_n,\cdot)\}_{n=0}^{\infty}$ be a family of independent Wiener processes on $[0;1]$ such that $ w(u_n,0)=u_n$. Then for any $n\in\mathbb{N}$
			$$
			\mathsf{P}\left\{\;\max_{s\in[0;1]}\max_{j=\overline{0,2n-1}}w(u_j,s)\ge\min_{s\in[0;1]}w(u_{2n},s)\;\right\}<\frac{1}{2^{n^2}\sqrt{\pi \ln2}}.
			$$
		\end{lemma}
		\begin{proof} Let $w_1,w_2$ be an independent Wiener processes on $[0;1]$ started from point $0$, i.e. $w_1(0)=w_2(0)=0$. It can be noticed that
			$$
			\mathsf{P}\left\{\;\max_{s\in[0;1]}\max_{j=\overline{0,2n-1}}w(u_j,s)\ge\min_{s\in[0;1]}w(u_{2n},s)\;\right\}=
			$$
			$$
			=\mathsf{P}\left\{\;\exists\; j=\overline{0,2n-1}:\;\;\max_{s\in[0;1]}w(u_j,s)-\min_{s\in[0;1]}w(u_{2n},s)\ge0\;\right\}\le
			$$
			$$
			\le\sum_{j=0}^{2n-1}\mathsf{P}\left\{\;\max_{s\in[0;1]}w(u_j,s)-\min_{s\in[0;1]}w(u_{2n},s)\ge0\;\right\}\le
			$$
			$$
			\le \sum_{j=0}^{2n-1}\mathsf{P}\left\{\;\max_{s\in[0;1]}w_1(s)-\min_{s\in[0;1]}w_2(s)\ge u_{2n}-u_{j}\;\right\}.
			$$
			From the fact that $\{u_n\}_{n=0}^{\infty}$ is an increasing sequence we can estimate the last expression and complete the proof
			$$ \sum_{j=0}^{2n-1}\mathsf{P}\left\{\;\max_{s\in[0;1]}w_1(s)-\min_{s\in[0;1]}w_2(s)\ge u_{2n}-u_{j}\;\right\}
			\le
			$$
			$$\le\frac{1}{\sqrt{\pi}} \sum_{j=0}^{2n-1}\frac{1}{ u_{2n}-u_{j}}e^{-\frac{( u_{2n}-u_{j})^2}{4}}\le
			$$
			$$\le
			\frac{2n-1}{ \sqrt{\pi}(u_{2n}-u_{2n-1})}e^{-\frac{( u_{2n}-u_{2n-1})^2}{4}}\le
			\frac{1}{2^{n^2}\sqrt{\pi \ln2}}.
			$$ 
		\end{proof}
		
		
		\noindent Lets prove that the function $ f=\sum_{n=0}^{\infty}1\!\!\,{\rm I}_{[u_{2n};u_{2n+1}]}$ is a $T_1$-essential.
		Using reasoning from the first part of the proof it can be checked that for considered function $f$ the following equality holds			
		$$
		\mathsf{P}\{\;\|T_1f\|_{L_2(\mathbb{R})}>0\;\}=\mathsf{P}\left\{\sum_{n=0}^{\infty}(y(u_{2n+1},1)-y(u_{2n},1))>0\right\}.
		$$	
		
		Lets prove that 
		\begin{equation}\label{eq1}
		\mathsf{P}\left\{\;\limsup_{n\to\infty}\left(y(u_{2n+1},1)-y(u_{2n},1)\right)\ge1\;\right\}=1.
		\end{equation}
		
		\noindent Lets build a new processes	
		$\{\widetilde{y}(u_n,\cdot)\}_{n=0}^{\infty} $ such that  $\{\widetilde{y}(u_n,\cdot)\}_{n=0}^{\infty} $ and $\{y(u_n,\cdot)\}_{n=0}^{\infty} $ have the same distributions in $ \mathcal{C}\left([0;1]\right)^{\infty}$ in the following way \cite{4}. Let $ \{w(u_n,\cdot)\}_{n=0}^{\infty}$ be a given family of Wiener processes on $[0;1]$, $ w(u_n,0)=u_n$. Lets denote collision time of  $ f,g\in \mathcal{C}\left([0;1]\right)$ by $\tau[f,g]:=\inf\{t\;|\; f(t)=g(t)\}$. Put $\widetilde{y}(u_0,\cdot):= w(u_0,\cdot)$. Then for any $ n\in\mathbb{N}$, $s\in[0;1]$ one can define 					
		$$
		\widetilde{y}(u_n,s):= w(u_n,s)1\!\!\,{\rm I}\{\;s<\tau[w(u_n,\cdot),\widetilde{y}(u_{n-1},\cdot)]\;\}+
		$$
		$$
		+\widetilde{y}(u_{n-1},s)1\!\!\,{\rm I}\{s\ge\tau[w(u_n,\cdot),\widetilde{y}(u_{n-1},\cdot)]\;\}.
		$$
		According to constructions of stochastic processes $\{\widetilde{y}(u_n,\cdot)\}_{n=0}^{\infty} $	
		\begin{eqnarray}	\label{eq2}				
			\mathsf{P}\left\{\;\exists\;N\in\mathbb N:\;\forall\;n\ge N\quad \widetilde{y}(u_{2n},t)=w(u_{2n},t),\right.\nonumber\\
			\left.\;\widetilde{y}(u_{2n+1},t)=w(u_{2n+1},t)1\!\!\,{\rm I}\{\;t<\tau[w(u_{2n},\cdot),w(u_{2n+1},\cdot)]\}+\right.\\
			\left.+					w(u_{2n},t)1\!\!\,{\rm I}\{\;t\ge \tau[w(u_{2n},\cdot),w(u_{2n+1},\cdot)]\}\;\right\}=1.\nonumber
		\end{eqnarray}
		Thus, 
		$$\mathsf{P}\left\{\;\exists\;N\in\mathbb N:\;\forall\;n\ge N\quad \widetilde{y}(u_{2n+1},t)-\widetilde{y}(u_{2n},t)=w(u_{2n+1},t)-w(u_{2n},t)\right\}=1.$$
		For the considered sequence $\{u_n\}_{n=0}^{\infty}$ and any $n\in\mathbb N$ the following inequality holds	
		$$
	\mathsf{P}\{\;w(u_{2n+1},t)-w(u_{2n},t)\ge1\;\}=\int\limits_1^{\infty}\frac{1}{\sqrt{4\pi}}e^{-\frac{\left(v-\frac{1}{2^k}\right)^2}{4}}dv\ge 
		\frac{1}{\sqrt{4\pi}}\int\limits_1^{\infty}e^{-\frac{v^2}{4}}dv.
		$$
		\noindent Therefore, by the Borel-Cantelli lemma and \eqref{eq2},
		$$
		\mathsf{P}\{\;\limsup_{n\to\infty}(\widetilde{y}(u_{2n+1},t)-\widetilde{y}(u_{2n},t))\ge1\;\}=1.
		$$
	\end{proof}

	Using observation from Example \ref{ex1} one can introduce a family of $T_t$-essential functions for all $t>0$.
	
	For any $\varepsilon>0$ let us consider an integral operator $K_{\varepsilon}$ in $L_2(\mathbb{R})$ with the kernel
	\begin{equation}
		\label{eq30}
		k_{\varepsilon}(v_1,v_2)=\int_{\mathbb{R}}p_{\varepsilon}(u-v_1)p_{\varepsilon}(u-v_2)dy(u,t),
	\end{equation}
	where $ v_1,v_2\in\mathbb{R},$ and $p_{\varepsilon}(u)=\frac{1}{\sqrt{2\pi\varepsilon}}e^{-\frac{u^2}{2\varepsilon}}$. By the change of variables formula for an Arratia flow \cite{2},
	\begin{equation}
		\label{eq3}
		(K_{\varepsilon}f,f)=\int_{\mathbb{R}}(f\ast p_{\varepsilon})^2(x(u,t))du.
	\end{equation}
	
	\begin{lemma}
		For any $\varepsilon>0$ and nonzero function $f\in L_2(\mathbb{R})$
		$$
		\mathsf{P}\left\{\;\left(K_{\varepsilon}f,f\right)\ne0\;\right\}=1.
		$$
	\end{lemma}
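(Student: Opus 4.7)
The plan is to invoke formula \eqref{eq3}, which rewrites $(K_{\varepsilon}f,f)$ as $\int_{\mathbb{R}}(f\ast p_{\varepsilon})^{2}(x(u,t))\,du$, and then show that this integral is strictly positive with probability one. Set $g:=f\ast p_{\varepsilon}$. Two preliminary properties of $g$ are the key inputs: first, $g$ is a real-analytic function on $\mathbb{R}$, since $p_{\varepsilon}$ is a real-analytic kernel with sufficient decay to allow differentiation under the integral any number of times (equivalently, $g$ is the heat extension of the $L_{2}$-datum $f$); second, $g\not\equiv 0$, because $\widehat{p_{\varepsilon}}$ is nowhere vanishing and $\widehat{g}=\widehat{f}\cdot\widehat{p_{\varepsilon}}$, so $g\equiv 0$ would force $f=0$ in $L_{2}(\mathbb{R})$, contradicting the hypothesis.

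Granted these two facts, a nonzero real-analytic function on $\mathbb{R}$ has only isolated zeros, so its zero set $Z_{g}=\{u\in\mathbb{R}:g(u)=0\}$ is at most countable. This places us precisely in the framework of Example \ref{ex1}: applying the countability estimate \eqref{zer} with $A=Z_{g}$ yields
$$\mathsf{P}\bigl\{x(\mathbb{R},t)\cap Z_{g}\ne\emptyset\bigr\}=0.$$
On the complementary event of full probability, $g(x(u,t))\ne 0$ for every $u\in\mathbb{R}$, so the nonnegative integrand $(f\ast p_{\varepsilon})^{2}(x(u,t))$ is strictly positive on all of $\mathbb{R}$; in particular its Lebesgue integral is positive. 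Combining this with \eqref{eq3} gives $(K_{\varepsilon}f,f)>0$ almost surely.

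The only point requiring a little care is the verification of the two preliminary properties of $g$; both are classical facts about the Gauss--Weierstrass semigroup on $L_{2}(\mathbb{R})$ and should not present a real obstacle. Once they are recorded, the argument is an almost immediate reduction to the countability principle already exploited in Example \ref{ex1}.
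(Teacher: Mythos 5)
Your argument is correct and follows essentially the same route as the paper: reduce via \eqref{eq3} to the positivity of $\int_{\mathbb{R}}(f\ast p_{\varepsilon})^2(x(u,t))\,du$, observe that $f\ast p_{\varepsilon}$ is a nonzero analytic function whose zero set is therefore countable, and apply \eqref{zer} exactly as in Example \ref{ex1}. You merely make explicit the routine verifications (analyticity and non-vanishing of the convolution) that the paper leaves implicit.
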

	\begin{proof}
		According to \eqref{eq1} it is sufficient to note that $f\ast p_{\varepsilon} $ is an analytic function.
		Consequently, 
		for any $t>0$ the following relations are true
		$$
		\mathsf{P}\left\{\;\left(K_1f,f\right)>0\;\right\}=\mathsf{P}\left\{\; \|T_t(f\ast p_1)\|_{L_2(\mathbb{R})}>0\;\right\}=\mathsf{P}\left\{\;x(\mathbb{R},t)\cap Z_{f\ast p_1}=\emptyset \;\right\}=1.
		$$
	\end{proof}
	According to the last theorem and \eqref{eq3}, for any $\varepsilon>0$ and nonzero $f\in L_2(\mathbb{R})$ the function $f\ast p_{\varepsilon}$ is a $T_t$-essential for each $t>0$.
	
	\section{On change of compact sets under strong random operator generated by an Arratia flow}
	As it was noticed in the introduction any function with bounded support isn't a $T_t$-essential. Consequently,
	if $K\subseteq L_2(\mathbb{R})$ is a compact set of functions with uniformly bounded supports such that $T_t(K)$ is well-defined, then the image $T_t(K)$ equals to $ \{0\}$ with positive probability.
	It was shown in \cite{2} that $T_t$ may also change the geometry of $K$ even in the case of a compact set $K$ for which $ T_t(K)\ne \{0\}$ a.s. For example, the image $T_t(K)$ of a convergent sequence and its limiting point may not have limiting points. In this section we build a compact set $K$ for which $ T_t(K)\ne \{0\}$ a.s. and investigate the change of its Kolmogorov-widths in $L_2(\mathbb{R})$ under random operator $T_t$.
	\begin{definition}
		[\cite{5}]
		\label{defn2}
		The Kolmogorov $n$-width of a set $C\subseteq H$ in a Hilbert space $H$ is given by
		$$
		d_n(C)=\inf_{\dim L\leq n}\,\sup_{f\in C}\,\inf_{g\in L}\|f-g\|_H,
		$$
		where $L$ is a subspace of $H$.
	\end{definition}
	We consider the following compact set in $L_2(\mathbb{R})$
	\begin{equation}\label{eq7}
		K=\{\;f\in W_2^1(\mathbb{R})|\;\int_{\mathbb{R}}f^2(u)(1+|u|)^3du+\int_{\mathbb{R}}\left(f'(u)\right)^2(1+|u|)^7du\le1\;\}.
	\end{equation}	
	Estimations on its Kolmogorov-widths in $L_2(\mathbb{R})$ are presented in the next lemma.
	\begin{lemma}
		There exist positive constants $ C_1,C_2$ such that for any $n\in\mathbb N$
		$$
		\frac{C_1}{n}\le d_n\left(K\right)\le\frac{C_2}{n^{\frac{3}{10}}}.
		$$
	\end{lemma}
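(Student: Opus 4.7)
The plan is to prove the two bounds separately by standard techniques: for the lower bound, exhibit $n{+}1$ mutually orthogonal elements of $K$ of $L_2$-norm $\gtrsim 1/n$; for the upper bound, approximate $f\in K$ by piecewise constants on a uniform partition of a growing window $[-R,R]$, with $R$ chosen as a power of $n$ to match the stated rate.

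For the lower bound, fix a nonzero smooth bump $\phi$ supported in $[0,1]$ with $\|\phi\|_{L_2(\mathbb R)}=1$, and put $\phi_k(u)=\sqrt{n{+}1}\,\phi\bigl((n{+}1)u-k\bigr)$ for $k=0,\dots,n$. These are $L_2$-orthonormal, have disjoint supports inside $[0,1]$, and satisfy $\|\phi_k'\|_{L_2}^2=(n{+}1)^2\|\phi'\|_{L_2}^2$. Because $(1+|u|)^3\le 8$ and $(1+|u|)^7\le 128$ on $[0,1]$, the weighted $W_2^1$-norm of $\phi_k$ is at most $C(n{+}1)$, so $h_k:=\phi_k/[c(n{+}1)]$ lies in $K$ for a suitable $c$, with $\|h_k\|_{L_2}=1/[c(n{+}1)]$. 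Disjoint supports make the weighted $W_2^1$-norm additive on linear combinations, hence every $g=\sum_k c_k h_k$ with $\sum c_k^2=1$ remains in $K$ and has $\|g\|_{L_2}=1/[c(n{+}1)]$. For any $n$-dimensional subspace $L\subset L_2(\mathbb R)$, some nonzero $g$ in the $(n{+}1)$-dimensional span of the $h_k$ is $L_2$-orthogonal to $L$; then $\operatorname{dist}(g,L)=\|g\|_{L_2}\ge C_1/n$, giving the lower bound.

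For the upper bound, fix $R>0$, partition $[-R,R]$ into $n$ equal subintervals $I_1,\dots,I_n$ of length $h=2R/n$, and let $L_n$ be the span of the indicator functions of these intervals; for $f\in K$, approximate it by $\tilde f\in L_n$ equal to the mean $\bar f_j$ of $f$ over $I_j$ on each $I_j$ and to zero outside $[-R,R]$. The tail satisfies $\|f\|_{L_2(|u|>R)}^2\le (1+R)^{-3}\int f^2(1+|u|)^3\,du\le (1+R)^{-3}$, and on each $I_j$ the one-dimensional Poincar\'e inequality gives $\|f-\bar f_j\|_{L_2(I_j)}\le(h/\pi)\|f'\|_{L_2(I_j)}$. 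Summing and using $(1+|u|)^7\ge 1$ (so $\|f'\|_{L_2([-R,R])}\le 1$) produces $\sum_j\|f-\bar f_j\|_{L_2(I_j)}^2\le (2R/(n\pi))^2$, and hence $\|f-\tilde f\|_{L_2}^2\le (2R/(n\pi))^2+(1+R)^{-3}$. Setting $R=n^{1/5}$ makes the tail $O(n^{-3/5})$ while the central term is $O(n^{-8/5})$, producing $\|f-\tilde f\|_{L_2}\le C_2 n^{-3/10}$ uniformly for $f\in K$.

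The only point requiring care is the additivity step in the lower bound — namely, that disjoint supports propagate to the weighted $W_2^1$-norm so that every unit-coefficient combination of the $h_k$ still lies in $K$; beyond this, the argument is a routine truncation-plus-Poincar\'e estimate whose exponent is dictated by the scaling $R=n^{1/5}$.
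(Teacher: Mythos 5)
Your proof is correct and follows essentially the same route as the paper: the upper bound comes from piecewise-constant approximation on an $n$-part partition of the window $[-n^{1/5},n^{1/5}]$ combined with the weighted tail estimate $\int_{|u|>R}f^2\le (1+R)^{-3}$, and the lower bound from embedding a ball of radius of order $1/n$ in an $(n+1)$-dimensional span of disjointly supported bumps into $K$. The only cosmetic differences are that you use the one-dimensional Poincar\'e inequality where the paper uses a pointwise Cauchy--Schwarz bound on $f(u)-f(u_k)$, and that you prove the width-of-a-ball fact directly by a dimension-count/orthogonality argument where the paper cites Tikhomirov's theorem on the $n$-width of an $(n+1)$-dimensional ball.
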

	\begin{proof}
		Let $n\in\mathbb N$ be fixed. To estimate $d_n\left(K\right)$ from above one can consider the partition $ \{u_k\}_{k=0}^{n}$ of $[-n^{\frac{1}{5}};n^{\frac{1}{5}}]$ into $n$ segments $ \{[u_k;u_{k+1}], k=\overline{0,n-1}\}$ with equal lengths.	Lets show that for the $n$-dimensional subspace\\ $ L_n=\overline{LS\{1\!\!\,{\rm I}_{[u_k;u_{k+1}]},\;k=\overline{0,n-1}\}}$
		$$
		\sup_{f\in K}\,\inf_{g\in L_n}\|f-g\|_{L_2(\mathbb{R})}\le \frac{C_2}{n^{\frac{3}{10}}}.
		$$ 
		If $f\in K$ then $\int_{\mathbb{R}}f^2(u)(1+|u|)^3du\le1$. Thus, for any $ C>0$
		$$
		\int_{|u|>c}f^2(u)du\le\frac{1}{(1+C)^3}\int_{|u|>c}f^2(u)(1+|u|)^3du\le\frac{1}{C^3}.
		$$
		So, for the function $ g_f=\sum\limits_{k=0}^{n-1}f(u_k)1\!\!\,{\rm I}_{[u_k;u_{k+1}]}\;\in L_n$ the following estimation is true
		$$
		\|f-g_f\|_{L_2(\mathbb{R})}^2\le\frac{1}{n^{\frac{3}{5}}}+\int_{|u|\le n^{\frac{1}{5}}}(f(u)-g_f(u))^2du.
		$$
		By the Cauchy inequality, for $f\in K $  and  $ u\in [u_k;u_{k+1}]$
		$$
		\left(\;\int\limits_{u_k}^uf'(v)dv\;\right)^2\le \int\limits_{u_k}^u\frac{dv}{(1+|v|)^7}\le u-u_k.
		$$
		Consequently, 
		$$
		\int\limits_{|u|\le n^{\frac{1}{5}}}\left(f(u)-g_f(u)\right)^2du=\sum\limits_{k=0}^{n-1}\int\limits_{u_k}^{u_{k+1}}\left(\;\int\limits_{u_k}^uf'(v)dv\;\right)^2du\le
		$$
		$$\le
		\frac{1}{2}\sum\limits_{k=0}^{n-1}(u_{k+1}-u_k)^2=\frac{2}{n^{\frac{3}{5}}},
		$$
		and the upper estimation for $ d_n(K)$ holds with the constant $C_2=3^{\frac{1}{2}}$. 
		
		To get a lower estimation for $d_n(K)$ we use the theorem about $n$-width of $(n+1)$-dimensional ball \cite{5}.  Let 
		$ \{u_{k}\}_{k=0}^{2(n+1)}$ be a partition of $[0;1]$ into $2(n+1)$ segments $ \{[u_k;u_{k+1}],\; k=\overline{0,2n+1}\}$ with equal lengths. Consider $(n+1)$-dimensional space $ L_{n+1}=LS\{f_k,\;k=\overline{0,n}\}$, where the functions $f_k$, $k=\overline{0,n}$, are defined as follows		
		\begin{equation}\label{eq4}	
			f_k=
			\begin{cases}
				\;	\;\;0,&\text{} u\notin [u_{2k};u_{2k+1}],\\
				\;	\;\;1,&\text{} u\in [u_{2k}+\frac{1}{6(n+1)};u_{2k}+\frac{2}{6(n+1)}],\\
				\;\;\;	6(n+1)(u-u_{2k}),&\text{} u\in[u_{2k};u_{2k}+\frac{1}{6(n+1)}],\\
				-6(n+1)(u-u_{2k+1}),&\text{} u\in[u_{2k}+\frac{2}{6(n+1)};u_{2k+1}].\\
			\end{cases}
		\end{equation}

		We show that if $c=\frac{2^3(5+2^9\cdot3^3)}{5}$ then the ball $ B_{n+1}=\{f\in L_{n+1}|\;\|f\|_{L_2(\mathbb{R})}\le\frac{1}{\sqrt{c}n}\}$ is a subset of $K$. 
		Since $\|f_k\|_{L_2(\mathbb{R})}^2=\frac{5}{18(n+1)}$, $k=\overline{0,n}$, then for any $f\in B_{n+1}$ such that $ f=\sum\limits_{k=0}^nc_kf_k$	
		the following  relation holds	$\sum_{k=0}^nc_k^2\le \frac{36}{5cn}$.	Thus, according to \eqref{eq4},
		$$
		\int\limits_{\mathbb{R}}f^2(u)(1+|u|)^3du+\int\limits_{\mathbb{R}}\left(f'(u)\right)^2(1+|u|)^7du\le
		$$
		$$
		\le2^3\|f\|_{L_2(\mathbb{R})}^2+2^7\cdot\sum\limits_{k=0}^nc_k^2\left(\int\limits_{u_{2k}}^{u_{2k}+\frac{1}{6(n+1)}}(6(n+1))^2du+
		\int\limits_{u_{2k}+\frac{2}{6(n+1)}}^{u_{2k+1}}(6(n+1))^2du\right)\le
		$$
		$$
		\le			\frac{2^3}{cn^2}+2^{10}\cdot 3n\frac{36}{5cn}\le\frac{1}{c}\cdot\frac{2^3(5+2^9\cdot3^3)}{5}=1.
		$$	
		Consequently, $B_{n+1}\subset K$ and $ d_n(K)\ge d_n(B_{n+1})$. Due to the theorem about $n$-width of $(n+1)$-dimensional ball, $d_n(B_{n+1})=\frac{1}{\sqrt{c}n}$ \cite{5}. So the lower estimation for		$ d_n(K)$ holds with $ C_1:=\sqrt{c}$.
	\end{proof}
	
	To show that estimations from above for the Kolmogorov-widths of considered compact set $K$ don't change under $T_t$ one may use the same idea as in Lemma 2. 
	\begin{theorem}
		There exists $\widetilde{\Omega}$ of probability one such that for any $\omega\in\widetilde{\Omega}$ and $n\in\mathbb{N}$
		\begin{equation}\label{eq6}	
			d_n\left(T_t^{\omega}(K)\right)\le\frac{C(\omega)}{n^{\frac{3}{10}}},
		\end{equation}
		where the constant $C(\omega)>0$ doesn't depend on $n$.
	\end{theorem}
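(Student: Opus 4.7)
The plan is to mimic the upper bound argument of Lemma~2, but with a \emph{random} approximating subspace obtained by pushing the deterministic one forward through $T_t$. Keeping the partition $u_k^{(n)} = -n^{1/5} + 2k n^{1/5}/n$, $k=0,\ldots,n$, of $[-n^{1/5}; n^{1/5}]$ and the subspace $L_n = \overline{LS\{1\!\!\,{\rm I}_{[u_k^{(n)}; u_{k+1}^{(n)}]},\; 0 \le k < n\}}$ from the proof of that lemma, I will take
\[
L_n^\omega := T_t^\omega(L_n) = \overline{LS\{1\!\!\,{\rm I}_{A_k^{(n)}(\omega)},\; 0 \le k < n\}}, \qquad A_k^{(n)}(\omega) := \{u : x(u, t) \in [u_k^{(n)}; u_{k+1}^{(n)}]\},
\]
so $\dim L_n^\omega \le n$. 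For $f \in K$ I approximate it by $\widetilde g_{n,f} = \sum_{k=0}^{n-1} f(u_k^{(n)})\,1\!\!\,{\rm I}_{[u_k^{(n)}; u_{k+1}^{(n)}]} \in L_n$, so that $T_t \widetilde g_{n,f} \in L_n^\omega$. The change-of-variables formula \eqref{change} turns the image-space distance into a Stieltjes integral against the conjugate flow,
\[
\|T_t f - T_t \widetilde g_{n, f}\|^2_{L_2(\mathbb{R})} = \int_{\mathbb{R}} (f(v) - \widetilde g_{n, f}(v))^2 \, dy(v, t),
\]
and I will split this integral at $|v| = n^{1/5}$.

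On the bulk $|v| \le n^{1/5}$ the Cauchy inequality step from Lemma~2 applies verbatim: for $v \in [u_k^{(n)}; u_{k+1}^{(n)}]$, $(f(v) - f(u_k^{(n)}))^2 \le \int_{u_k^{(n)}}^v (1+|s|)^{-7}\, ds \le v - u_k^{(n)} \le 2/n^{4/5}$, so the bulk contribution is at most $\frac{2}{n^{4/5}}(y(n^{1/5}, t) - y(-n^{1/5}, t))$. On the tail I will use the pointwise estimate $f^2(v) \le 2/(1+|v|)^5$ valid for every $f \in K$, which I derive for $v > 0$ by writing $f^2(v) = -2\int_v^\infty f f'\,ds$ and estimating
\[
\int_v^\infty |f f'|\,ds \le \sqrt{\int_v^\infty f^2\,ds}\,\sqrt{\int_v^\infty (f')^2\,ds} \le (1+v)^{-5},
\]
where the weighted bounds $\int_v^\infty f^2\,ds \le (1+v)^{-3}$ and $\int_v^\infty (f')^2\,ds \le (1+v)^{-7}$ come directly from \eqref{eq7}. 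The tail contribution is then at most $2\int_{|v|>n^{1/5}} (1+|v|)^{-5}\, dy(v, t)$.

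The heart of the argument, and the main obstacle, is to exhibit a single full-measure event $\widetilde\Omega$ on which
\[
N_a(\omega) := y(a, t) - y(-a, t) \le C_1(\omega)\, a \qquad \text{and} \qquad R_a(\omega) := \int_{|v|>a} \frac{dy(v, t)}{(1+|v|)^5} \le \frac{C_2(\omega)}{a^3}
\]
hold simultaneously for every $a \ge 1$. Both rates come from dyadic Borel--Cantelli. Since $y$ is itself an Arratia flow, $y(v, t)$ is marginally $N(v, t)$, so the Gaussian tail gives $\mathsf{P}(N_{2^j} > 3\cdot 2^j) \le 2 e^{-4^j/(8t)}$, which is summable, and monotonicity of $a\mapsto N_a$ interpolates the bound to every $a$. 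Likewise $\mathsf{E}[dy(v, t)] = dv$ yields $\mathsf{E}[R_{2^j}] = O(2^{-4j})$, and Markov's inequality combined with monotonicity of $R_a$ in $a$ gives the claimed polynomial rate. Substituting $a = n^{1/5}$ produces $\|T_t f - T_t \widetilde g_{n, f}\|^2_{L_2} \le C(\omega)/n^{3/5}$ uniformly in $f \in K$ for every sufficiently large $n$, hence \eqref{eq6} for those $n$; the finitely many small $n$ are absorbed into $C(\omega)$ using $\sup_{f \in K} \|T_t^\omega f\|_{L_2}^2 \le 2\int (1+|v|)^{-5}\, dy(v, t) < \infty$ a.s. What makes this step the delicate one is that it is precisely the passage that upgrades the deterministic width estimate of Lemma~2 to an almost sure statement uniform in both $f$ and $n$, and it ultimately rests only on one-dimensional marginals of the conjugate flow.
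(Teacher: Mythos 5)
Your proof is correct and follows essentially the same route as the paper: the random subspace spanned by $T_t^{\omega}1\!\!\,{\rm I}_{[u_k;u_{k+1}]}$, the piecewise-constant approximation, the change-of-variables formula, and the split of the resulting Stieltjes integral at $|v|=n^{1/5}$ with a Cauchy-inequality bulk bound and a pointwise polynomial tail bound. The only difference is that where the paper controls the growth of $y(\cdot,t)$ by citing the almost sure limit $\lim_{|u|\to\infty}|y(u,t)|/|u|=1$ from the literature, you rederive the needed linear-growth and tail-decay estimates by a dyadic Borel--Cantelli argument from the Gaussian one-dimensional marginals of the conjugate flow, which is a legitimate self-contained substitute.
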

	\begin{proof} For a fixed $n\in\mathbb N$ lets consider a partition $\{u_k\}_{k=0}^n$ of $[-n^{\frac{1}{5}};n^{\frac{1}{5}}]$ into $n$ segments with equal lengths.
		To prove \eqref{eq6} it's sufficient to show the following inequality for the linear space $L_n^{\omega}=LS\{\;T_t^{\omega}1\!\!\,{\rm I}_{[u_k;u_{k+1}]},\;k=\overline{0,n-1}\;\}$ with dimension at most $n$ 
		$$
		\sup_{h_1\in T_t^{\omega}(K)}\inf_{h_2\in L_n^{\omega}}\|h_1-h_2\|_{L_2(\mathbb{R})}\le\frac{C(\omega)}{n^{\frac{3}{10}}}.
		$$ 
		According to the change of variable formula for an Arratia flow, one can check the equality for any $f\in K$ 
		$$
		\left\|T_t^{\omega}f-T_t^{\omega}\left(\sum_{k=0}^{n-1}f(u_k)1\!\!\,{\rm I}_{[u_k;u_{k+1}]}\right)\right\|^2_{L_2(\mathbb{R})}=
		\int_{|u|>n^{\frac{1}{5}}}f^2(u)dy(u,t,\omega)+
		$$
		$$+\int_{|u|\le n^{\frac{1}{5}}}\left(f(u)-\sum_{k=0}^{n-1}f(u_k)1\!\!\,{\rm I}_{[u_k;u_{k+1}]}(u)\right)^2dy(u,t,\omega).
		$$
		To estimate from above the last integral lets notice that  
		$$
		\int_{|u|\le n^{\frac{1}{5}}}\left(f(u)-\sum_{k=0}^{n-1}f(u_k)1\!\!\,{\rm I}_{[u_k;u_{k+1}]}(u)\right)^2dy(u,t,\omega)\le
		$$
		$$\le
		\sum_{k=0}^{n-1}\int_{u_k}^{u_{k+1}}\left(\int_{u_k}^u\left|f'(v)\right|dv\right)^2dy(u,t,\omega).
		$$
		Due to \eqref{eq7}, for any $f\in K$ and $u\in [u_k;u_{k+1}]$ 
		$$
		\left(\int_{u_k}^u\left|f'(v)\right|dv\right)^2\le \int_{u_k}^{u}\frac{dv}{(1+|v|)^7}\le u_{k+1}-u_k.
		$$ 
		Thus, 
		$$
		\sum_{k=0}^{n-1}\int_{u_k}^{u_{k+1}}\left(\int_{u_k}^u\left|f'(v)\right|dv\right)^2dy(u,t,\omega)\le
		$$
		$$\le
		\sum_{k=0}^{n-1}(u_{k+1}-u_k)\int_{u_k}^{u_{k+1}}dy(u,t,\omega)=\frac{2}{n^{\frac{4}{5}}}(y(n^{\frac{1}{5}},t,\omega)-y(-n^{\frac{1}{5}},t,\omega)).
		$$
		For an Arratia flow  $ \{y(u,s),u\in\mathbb{R},s\in [0;t]\}$ the following relation is true \cite{7}
		$$
		\lim\limits_{|u|\to+\infty}\frac{|y(u,t)|}{|u|}=1\;\;\mbox{a.s.}
		$$
		Consequently, for any  $\omega\in\widetilde{\Omega}=\{\omega'\in\Omega|\; \lim\limits_{|u|\to+\infty}\frac{|y(u,t,\omega')|}{|u|}=1\}$ the estimation holds
		\begin{equation}\label{eq9}
			\int_{|u|\le n^{\frac{1}{5}}}\left(f(u)-\sum_{k=0}^{n-1}f(u_k)1\!\!\,{\rm I}_{[u_k;u_{k+1}]}(u)\right)^2dy(u,t,\omega)	\le
			\frac{4c(\omega)}{n^{\frac{3}{5}}}
		\end{equation}
		with the constant	
		\begin{equation}\label{eq8}
			c(\omega):=\sup\limits_{|u|\ge1}\frac{|y(u,t,\omega)|}{|u|}.
		\end{equation} 	 
		
		Lets prove that for any $\omega\in\widetilde{\Omega}$ there exists a constant $\widetilde{c}(\omega)$ such that
		$$\int_{|u|>n^{\frac{1}{5}}}f^2(u)dy(u,t,\omega)\le\frac{\widetilde{c}(\omega)}{n^{\frac{3}{5}}}.$$ 
		It can be noticed that 
		$
		\int_{|u|>n^{\frac{1}{5}}}f^2(u)dy(u,t)\le	\frac{1}{n^{\frac{3}{5}}}\int_{|u|>n^{\frac{1}{5}}}f^2(u)(1+|u|)^3dy(u,t).
		$
		Denote by $\{\theta_j\}_{j=1}^{\infty}$ a sequence of jump points of the function $ y(\cdot,t)$ on $\mathbb{R}_+$. Thus, one may show
		$$
		\int_{u>n^{\frac{1}{5}}}f^2(u)(1+u)^3dy(u,t)=
		\sum_{\theta_i\ge n^{\frac{1}{5}}}f^2(\theta_i)(1+\theta_i)^3\Delta y(\theta_i,t)=
		$$
		$$
		=	\sum_{k=1}^{\infty}\sum\limits_{\{i:\;\theta_i\in\left[\left. k;k+1\right)\right.\}}f^2(\theta_i)(1+\theta_i)^3\Delta y(\theta_i,t)
		\le
		$$
		$$\le\sum_{k=1}^{\infty}(2+k)^3\sum_{\{i:\;\theta_i\in\left[\left.k;k+1\right)\right.\}}f^2(\theta_i)\Delta y(\theta_i,t).
		$$
		According to the Cauchy inequality and \eqref{eq7}, for any	$u\in\mathbb{R}_+$ the following relations hold	
		$$
		f^2(u)\le\int_u^{+\infty}\left(f'(v)\right)^2(1+v)^7dv\cdot\int_u^{+\infty}\frac{dv}{(1+v)^7}\le\frac{1}{6u^6}.
		$$
		Consequently, due to \eqref{eq8}, the inequalities are true	
		$$
		\sum_{k=1}^{\infty}(2+k)^3\sum_{\{i:\;\theta_i\in\left[\left.k;k+1\right)\right.\}}f^2(\theta_i)\Delta y(\theta_i,t)\le
		$$
		$$\le
		\sum_{k=1}^{\infty}(2+k)^3\frac{1}{6k^6}(y(k+1,t)-y(k,t))\le
		\frac{16c}{3}\sum_{k=1}^{\infty}\frac{1}{k^2}.
		$$
		Hence, for any $\omega\in\widetilde{\Omega}$ there exists the constant $ C_1(\omega)=\frac{16c(\omega)}{3}$ such that
		$$\int\limits_{u>n^{\frac{1}{5}}}f^2(u)dy(u,t,\omega)\le \frac{C_1(\omega)}{n^{\frac{3}{5}}}.$$ Similarly, it can be proved that   $\int\limits_{u<-n^{\frac{1}{5}}}f^2(u)dy(u,t,\omega)\le \frac{C_1(\omega)}{n^{\frac{3}{5}}} $. According to this and \eqref{eq9}, for any $\omega\in\widetilde{\Omega}$ an upper estimation for $d_n\left(T_t^{\omega}(K)\right) $ is true.  
	\end{proof}	 
	
	The functions from Lemma 2 that were used to build the $(n+1)$-dimensional subspace are not $T_t$ essential for any $t>0$. Thus, the image of this subspace under the random operator $T_t$ may be equal to $\{0\}$ with positive probability. So, one can ask about existence of a finite-dimensional subspace such that for any $t>0$ its image under $T_t$ is a linear subspace with the same dimension. 
	
	\section{On subspace that preserves dimension under random operator generated by an Arratia flow}
	In this section for any $t>0$ and $ n\in\mathbb{N}$ we present a family $ \{g_k,k=\overline{0,n}\}$ of linearly independent $T_t$-essential functions such that its images under  $T_t$ are linearly independent. Such family generates a subspace which preserves dimension under random operator generated by an Arratia flow. It can be used to get a lower estimation of $ d_n(T_t(K)).$ 
	
	Lets fix any $n\in\mathbb{N}$, and build a family of $(n+1)$ linearly independent functions in the following way. Let $\{u_k\}_{k=0}^{2(n+1)}$ be a partition of	$[0;n^{-2}]$ into $2(n+1)$ segments with equal lengths. For any $k=\overline{0,n}$ define $f_k$ by	
	\begin{equation}\label{eq10}	
		f_k=
		\begin{cases}
			\;	\;\;0,&\text{} u\notin [u_{2k};u_{2k+1}],\\
			\;	\;\;1,&\text{} u\in [u_{2k}+\frac{n^{-2}}{6(n+1)};u_{2k}+\frac{2n^{-2}}{6(n+1)}],\\
			\;\;\;	\frac{6(n+1)}{n^{-2}}(u-u_{2k}),&\text{} u\in[u_{2k};u_{2k}+\frac{n^{-2}}{6(n+1)}],\\
			-\frac{6(n+1)}{n^{-2}}(u-u_{2k+1}),&\text{} u\in[u_{2k}+\frac{2n^{-2}}{6(n+1)};u_{2k+1}].\\
		\end{cases}
	\end{equation}	
	
	\begin{lemma}
		There exists  $\varepsilon_0>0$ such that for any $0<\varepsilon\le\varepsilon_0$ the functions	$ \{f_k\ast p_{\varepsilon}, k=\overline{0,n}\}$ are linearly independent.
	\end{lemma}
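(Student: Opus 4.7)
The plan is to prove linear independence of $\{f_k\ast p_{\varepsilon}\}_{k=0}^{n}$ by a continuity argument on the Gram matrix of these functions in $L_2(\mathbb{R})$, using the original family $\{f_k\}$ as the ``limiting'' case at $\varepsilon=0$.

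First I would observe that the functions $f_k$ themselves are linearly independent in a very strong sense: by \eqref{eq10} their supports $[u_{2k};u_{2k+1}]$ are pairwise disjoint. Consequently the Gram matrix $G(0):=\bigl((f_i,f_j)_{L_2(\mathbb{R})}\bigr)_{i,j=0}^{n}$ is diagonal with strictly positive diagonal entries $\|f_k\|_{L_2(\mathbb{R})}^2$, so $\det G(0)>0$.

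Next, for $\varepsilon>0$ consider $G(\varepsilon):=\bigl((f_i\ast p_{\varepsilon},f_j\ast p_{\varepsilon})_{L_2(\mathbb{R})}\bigr)_{i,j=0}^{n}$. Since $\{p_{\varepsilon}\}_{\varepsilon>0}$ is an approximate identity, $f_k\ast p_{\varepsilon}\to f_k$ in $L_2(\mathbb{R})$ as $\varepsilon\to 0^{+}$ for each $k$; together with the Young bound $\|f_k\ast p_{\varepsilon}\|_{L_2(\mathbb{R})}\le\|f_k\|_{L_2(\mathbb{R})}$, this gives $(f_i\ast p_{\varepsilon},f_j\ast p_{\varepsilon})_{L_2(\mathbb{R})}\to(f_i,f_j)_{L_2(\mathbb{R})}$, i.e.\ $G(\varepsilon)\to G(0)$ entrywise. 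Since the determinant is continuous in the matrix entries, $\det G(\varepsilon)\to\det G(0)>0$, so there exists $\varepsilon_0>0$ such that $\det G(\varepsilon)>0$ for every $0<\varepsilon\le\varepsilon_0$. On this range a nontrivial linear relation among the $f_k\ast p_{\varepsilon}$ would force the Gram determinant to vanish, so no such relation exists, which is exactly the required linear independence.

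The argument is essentially soft, so I do not expect a serious obstacle; the only point that requires a little care is the $L_2$-continuity of convolution with the Gaussian kernel at $\varepsilon=0^{+}$, which is a standard approximate-identity fact. A possible alternative (more in the spirit of Example~\ref{ex1}) is to use that each $f_k\ast p_{\varepsilon}$ is real-analytic and then recover the coefficients of a putative vanishing linear combination from its behaviour near the intervals $[u_{2k};u_{2k+1}]$, but the Gram-matrix route is cleaner and produces the uniform threshold $\varepsilon_0$ directly.
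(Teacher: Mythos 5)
Your proposal is correct and follows essentially the same route as the paper: both arguments rest on the convergence $f_k\ast p_{\varepsilon}\to f_k$ in $L_2(\mathbb{R})$ as $\varepsilon\to 0^{+}$ together with the continuity of the Gram determinant, which is nonzero at $\varepsilon=0$ because the $f_k$ are linearly independent. Your version merely makes explicit two points the paper leaves implicit — that $\det G(0)>0$ because the supports of the $f_k$ are disjoint, and that the convergence is the standard approximate-identity fact in $L_2$ — so it is, if anything, a slightly more complete write-up of the same argument.
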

	\begin{proof}
		Since considered functions $ \{f_k, k=\overline{0,n}\}$ are linearly independent then its Gram determinant doesn't equal to 0, i.e.  $ G(f_0,\ldots, f_n)\ne0$. For each $ k=\overline{0,n} $ 
		$$ f_k\ast p_{\varepsilon}\to f_k,\; \varepsilon\to0.$$ Hence, due to continuity of Gram determinant, one may notice that there exists $\varepsilon_0>0$ such that for any $0<\varepsilon\le\varepsilon_0$
		$$
		G(f_0\ast p_{\varepsilon},\ldots,f_n\ast p_{\varepsilon})\ne0,
		$$
		and the desired result is proved.
	\end{proof}
	
	\begin{theorem}
		There exists a set	$\Omega_0$ of probability one such that for any	$\omega\in\Omega_0$ the functions 		
		$ T_t^{\omega}(f_0\ast p_{\varepsilon}),\ldots,T_t^{\omega}(f_n\ast p_{\varepsilon})$ are linearly independent.
	\end{theorem}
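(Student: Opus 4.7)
The plan is to show that, with probability one, there exist $n+1$ distinct points $z_0,\ldots,z_n\in x(\mathbb{R},t)$ at which the matrix $\bigl((f_j\ast p_{\varepsilon})(z_i)\bigr)_{i,j=0}^{n}$ is non-singular; from this the theorem will follow. Writing $g_k:=f_k\ast p_{\varepsilon}$, observe that any identity $\sum_{k=0}^{n}c_kT_t^{\omega}g_k=0$ in $L_2(\mathbb{R})$ forces, via \eqref{change} applied to the square, that $g:=\sum c_kg_k$ vanishes at every jump of $y(\cdot,t,\omega)$. Every $z_i\in x(\mathbb{R},t)$ is such a jump, so non-singularity of the evaluation matrix at $z_0,\ldots,z_n$ then gives $c=0$.

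Set $D(z_0,\ldots,z_n):=\det\bigl(g_j(z_i)\bigr)_{i,j=0}^{n}$, a real-analytic function on $\mathbb{R}^{n+1}$. The preceding lemma says the $g_k$ are linearly independent, and this forces $D\not\equiv 0$: otherwise, for every $z\in\mathbb{R}$ the vector $(g_0(z),\ldots,g_n(z))$ would lie in a fixed $n$-dimensional subspace of $\mathbb{R}^{n+1}$, yielding a non-trivial $c$ with $\sum c_jg_j\equiv 0$. Hence $Z_D:=\{D=0\}$ is the zero set of a non-trivial real-analytic function and has Lebesgue measure zero in $\mathbb{R}^{n+1}$.

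For each $M>0$ fix $u_0(M)<\cdots<u_n(M)$ with consecutive gaps at least $M$ and put
$$
E_M=\{\tau_{u_i(M),u_{i+1}(M)}>t,\ i=0,\ldots,n-1\}.
$$
Formula \eqref{tau} with a union bound gives $\mathsf{P}(E_M)\to 1$ as $M\to\infty$. On $E_M$ the trajectories $x(u_i(M),\cdot)$, $i=0,\ldots,n$, have not coalesced by time $t$, so by the defining property of the Arratia flow they agree on $[0,t]$ with $n+1$ independent Wiener processes started from $u_0(M),\ldots,u_n(M)$. Consequently the joint law of $\bigl(x(u_0(M),t),\ldots,x(u_n(M),t)\bigr)$ restricted to $E_M$ is absolutely continuous with respect to Lebesgue measure on $\mathbb{R}^{n+1}$, and since $Z_D$ is Lebesgue-null,
$$
\mathsf{P}\Bigl(E_M\cap\bigl\{D\bigl(x(u_0(M),t),\ldots,x(u_n(M),t)\bigr)=0\bigr\}\Bigr)=0.
$$

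Setting $A_M:=E_M\setminus\{D(x(u_0(M),t),\ldots,x(u_n(M),t))=0\}$ yields $\mathsf{P}(A_M)=\mathsf{P}(E_M)\to 1$, so $\Omega_0:=\bigcup_{M\in\mathbb{N}}A_M$ has probability one. For any $\omega\in\Omega_0$ one picks $M$ with $\omega\in A_M$: the values $z_i=x(u_i(M),t,\omega)$ are distinct (monotonicity of $x(\cdot,t)$ together with non-coalescence on $E_M$), belong to $x(\mathbb{R},t,\omega)$, and satisfy $D(z_0,\ldots,z_n)\neq 0$, which is exactly what is needed. The main difficulty is the absolute-continuity claim of the third paragraph; it rests on property~3) of the Arratia flow, which guarantees that distinct trajectories evolve as independent Wiener processes up to the first pairwise coalescence.
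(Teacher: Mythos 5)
Your proof is correct, and it takes a genuinely different route from the paper. The paper works with the quadratic form $(K_{\varepsilon}f,f)=\sum_{\theta}(f\ast p_{\varepsilon})^2(\theta)\Delta y(\theta,t)$ and invokes an external result (reference [11]) stating that, almost surely, the linear span of the shifted kernels $\{p_{\varepsilon}(\cdot-\theta)\}$ over the jump points $\theta$ of $y(\cdot,t)$ is dense in $L_2([0;1])$; density then produces, for every nonzero $f$ in the span, a jump point at which $f\ast p_{\varepsilon}$ does not vanish. You instead exploit the real-analyticity of the $g_k=f_k\ast p_{\varepsilon}$: linear independence forces the evaluation determinant $D\not\equiv 0$, so its zero set is Lebesgue-null, and you kill it by an absolute-continuity argument on the event $E_M$ of no coalescence among $n+1$ widely separated starting points, with $\mathsf{P}(E_M)\to1$ controlled by \eqref{tau}. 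Your reduction of $\sum c_kT_t^{\omega}g_k=0$ to the vanishing of $\sum c_kg_k$ on $x(\mathbb{R},t,\omega)$ is sound (it is cleanest to argue directly from $\int(\sum c_kg_k)^2(x(u,t))\,du=0$ and the fact that $x(\cdot,t)$ is a right-continuous step function, so each value in its range is attained on a set of positive Lebesgue measure; the detour through jumps of $y$ is not needed). The one step you should flesh out is the absolute continuity of the law of $(x(u_0(M),t),\ldots,x(u_n(M),t))$ restricted to $E_M$: property 3) alone is a statement about brackets, and the clean justification is the explicit coalescing construction from independent Wiener processes (the same construction the paper uses for $\widetilde{y}$ in the proof of Theorem 2.4), under which the $(n+1)$-point motion coincides with the independent system on the no-coalescence event, so the restricted law is dominated by a nondegenerate Gaussian. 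The trade-off: your argument is self-contained and elementary, avoiding the dependence on the submitted paper [11], while the paper's density argument is shorter and yields the stronger quantitative statement $(K_{\varepsilon}f,f)>0$ simultaneously for all nonzero $f$ in the span without reference to specific evaluation points.
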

	\begin{proof}
	
		Denote by $K_{\varepsilon}$ the integral operator in 	$L_2(\mathbb{R})$ with the kernel $k_{\varepsilon}$. 
		To prove the statement of the theorem it's enough to show that on some $\Omega_0$ of probability one the following inequality holds	$ (K_{\varepsilon}f,f)>0$ for any nonzero $f\in LS\{f_0,\ldots,f_n\}$. Due to \eqref{change} 			
		\begin{equation}
			\label{eq12}
			(K_{\varepsilon}f,f)=\sum_{\theta}(f\ast p_{\varepsilon})^2(\theta)\Delta y(\theta,t),
		\end{equation}
		where $ \theta$ is a point of jump of the function 	$ y(\cdot,t)$.
		
	It was proved in \cite{11} that there exists $\Omega_0$ of probability one such that for any $\omega\in\Omega_0$ a linear span of the functions $\{p_{\varepsilon}(\cdot-\theta(\omega))|_{[0;1]}\}_{\theta(\omega)}$ is dense in $L_2([0;1])$. Thus, on the set $\Omega_0$ for any $f\in LS\{f_0,\ldots,f_n\}\subset L_2([0;1])$ one can find a random point $\theta_f$ such that $ (f(\cdot),p_{\varepsilon}(\cdot-\theta_f))\ne0.$ Since $y(\cdot,t):\mathbb{R}\to\mathbb{R}$ is nondecreasing then $\Delta y(\theta,t)>0 $ for any jump-point $\theta$. Consequently, on the set $\Omega_0$
	$$
	\sum_{\theta}(f\ast p_{\varepsilon})^2(\theta)\Delta y(\theta,t)=\sum_{\theta}(f(\cdot),p_{\varepsilon}(\cdot-\theta))^2 \Delta y(\theta,t)\ge
	$$
	$$
	\ge (f(\cdot),p_{\varepsilon}(\cdot-\theta_f))^2 \Delta y(\theta_f,t)>0,
	$$
	which proves the theorem.
	\end{proof}

\par\bigskip\noindent
{\bf Acknowledgment.} The first author acknowledges financial support from the Deutsche Forschungsgemeinschaft (DFG) within the project "Stochastic Calculus and Geometry of Stochastic Flows with Singular Interaction" for initiation of international collaboration between the Institute of Mathematics of the Friedrich-Schiller University Jena (Germany) and the Institute of Mathematics of the National Academy of Sciences of Ukraine, Kiev. 

The second author is grateful to the Institute of Mathematics of the Friedrich-Schiller University Jena (Germany) for hospitality and financial support within Erasmus+ programme 2015/16-2017/18 between Jena Friedrich-Schiller University (Germany) and the Institute of Mathematics of the National Academy of Sciences of Ukraine, Kiev.

\bibliographystyle{amsplain}


\end{document}